\newcommand{\be}{\begin{equation}}
\newcommand{\ee}{\end{equation}}
\newcommand{\beq}{\begin{eqnarray}}
\newcommand{\eeq}{\end{eqnarray}}
\newtheorem{thm}{Theorem}[section]
\newtheorem{lma}{Lemma}[section]
\newtheorem{prop}{Proposition}[section]
\theoremstyle{remark}
\newtheorem{rem}{Remark}[section]
\numberwithin{equation}{section}
\def\be{\begin{equation}}
\def\ee{\end{equation}}
\def\bee{\begin{equation*}}
\def\eee{\end{equation*}}
\def\lf{\left}
\def\ri{\right}
\newcommand{\Ric}{\mathrm{Ric}}
\def\Ric{\text{\rm Ric}}
\def\Rm{\text{\rm Rm}}
\def\p{\partial}
\def\heat{\lf(\frac{\p}{\p t}-\Delta_{g(t)}\ri)}
\def\e{\varepsilon}
\def\b{{\beta}}
\def\R{\mathbb{R}}
\begin{document}

\title{Gap Theorem on manifolds with small curvature concentration}

 \author[P.-Y. Chan]{Pak-Yeung Chan}
\address[Pak-Yeung Chan]{Mathematics Institute, Zeeman Building, University of Warwick, Coventry CV4 7AL, UK}
\email{pak-yeung.chan@warwick.ac.uk}

\author[M.-C. Lee]{Man-Chun Lee}
\address[Man-Chun Lee]{Department of Mathematics, The Chinese University of Hong Kong, Shatin, N.T., Hong Kong}
\email{mclee@math.cuhk.edu.hk}
%\thanks{M.-C. Lee is partially supported by direct grant}

\subjclass[2020]{Primary 53E20}

\date{\today}

\begin{abstract} 
In this work, we show that complete non-compact manifolds with non-negative Ricci curvature, Euclidean volume growth and sufficiently small curvature concentration are necessarily flat Euclidean space. 
\end{abstract}

\keywords{small curvature concentration}

\maketitle

\section{Introduction}
One of the fundamental goal of Riemannian geometry is to investigate the underlying structure of a manifold. In this work, we are interested in the structure of manifolds with pinched curvature in the term of curvature concentration. The consideration of manifolds with bounded integral curvature dates back to the study of volume non-collapsed Einstein manifolds or more generally manifolds with bounded Ricci curvature, for example see \cite{Anderson1989,AndersonCheeger1991,BandoKasueNakajima1989}. Curvature bound in form of $||\Rm||_{L^{n/2}}$ is particularly interesting since this is scaling invariant. In the content of Einstein manifolds, bounded curvature concentration often imposes sufficiently strong constraints so that only finitely many singularities can arise thanks to the Einstein structure, for example see \cite[Theorem 5.1]{BandoKasueNakajima1989}. 

In this work, we are interested in studying complete non-compact manifolds with $\Ric\geq 0$. In order to study its asymptotic, it is more often to consider its tangent cone at infinity. This is a pointed Gromov-Hausdorff limit of the sequence $(M^n,g_i,x_0)$ where $g_i=R_i^{-2}g$ for some divergent sequence $R_i\to +\infty$. The existence of such limit is guaranteed by Gromov's compactness Theorem. If $g$ is of Euclidean volume growth, the sequence of manifolds $(M,g_i,x_0)$ will be non-collapsed uniformly. If in addition $g$ is Ricci-flat with bounded $||\Rm||_{L^{n/2}}$, it follows from \cite[Theorem 1.5 \& Remark 2]{BandoKasueNakajima1989} that $M^n$ is isometric to $\mathbb{R}^n$ if $n$ is odd. This is in spirit similar to the gap Theorem of Anderson \cite{Anderson1990}. Motivated by this, we are interested to ask if a manifold is of $\Ric\geq 0$, Euclidean volume growth and has bounded curvature concentration, then is it necessarily flat? 
 In \cite[Theorem 6.10]{Cheeger2003},  a deep result of Cheeger together with Colding's volume continuity Theorem \cite{Colding1997} shows a gap Theorem under $\Ric\geq 0$, Euclidean volume growth and bounded curvature concentration when the dimension is odd. It is also immediate to observe from the Eguchi-Hanson metric that a general gap Theorem cannot be expected to hold for all dimension.   In comparison  with the Eguchi-Hanson metric, we consider manifolds with small curvature concentration which in theory rule out the singularities.  Indeed,  it was known by \cite[Theorem 8.3]{Cheeger2003} that this is the case under additional Ricci curvature decay assumption,  see also \cite[Theorem 4.32]{Cheeger2003}. It is therefore tempting to study the gap phenomenon {\sl without} any point-wise upper bound on the Ricci curvature but with small curvature concentration in \textbf{all dimension}.

Apart from the aforementioned static elliptic approach, the curvature concentration condition is shown to be a particularly well adapted quantity to the parabolic geometric flow. In \cite{ChanChenLee2021, ChanHuangLee2022}, a quantitative Ricci flow existence theory on manifold with small curvature concentration is developed and is applied  to investigate the topological rigidity of such manifolds without bounded curvature conditions (in fact under a weaker curvature conditions),  see also \cite{ChanLee2023} for a gap theorem without volume non-collapsing conditions.  In this work, we shall further exploit the monotonicity of the curvature concentration along the Ricci flow and show the following gap theorem:

\begin{thm} \label{thm:gap}
For any $n\geq 4,v>0$, there exists $\delta(n,v)>0$ such that if $(M^n,g_0)$ is a complete non-compact manifold with 
\begin{enumerate}
    \item[(i)] $\Ric(g_0)\geq 0$;
    \item[(ii)] The asymptotic volume ratio of $(M,g_0)$ satisfies $$\mathrm{AVR}(g) \geq v>0;$$
    \item[(iii)] Small curvature concentration, i.e. $$\left(\int_M |\Rm(g_0)|^\frac{n}{2}\, d\mathrm{vol}_{g_0} \right)^{\frac2n}<\delta,$$
\end{enumerate}
then $(M,g_0)$ is isometric to flat Euclidean space.
\end{thm}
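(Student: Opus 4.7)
The plan is to evolve $g_0$ by the Ricci flow using the quantitative existence theory for manifolds with small curvature concentration developed by the authors in \cite{ChanChenLee2021,ChanHuangLee2022}. Under hypothesis (iii), for $\delta$ sufficiently small this yields a smooth solution $g(t)$ on $M\times[0,\infty)$ satisfying the scale-invariant pointwise decay
\[
|\Rm(g(t))|\le \frac{\Psi(\delta|n)}{t},\qquad \Psi(\delta|n)\to 0 \text{ as }\delta\to 0,
\]
together with monotonicity of the $L^{n/2}$ curvature concentration $\|\Rm(g(t))\|_{L^{n/2}(M,g(t))}$ along the flow. Since $g(t)$ has uniformly bounded curvature on $[t_0,\infty)$ for each $t_0>0$, the maximum principle preserves $\Ric(g(t))\ge 0$ from hypothesis (i), and by Yokota-type results on AVR under Type III Ricci flow with nonnegative Ricci curvature the asymptotic volume ratio stays constant, $\mathrm{AVR}(g(t))=\mathrm{AVR}(g_0)\ge v$ for all $t>0$.

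Next I would perform a parabolic blow-down at infinity. Set $g_k(t)=\lambda_k^{-1}g(\lambda_k t)$, pointed at $x_0$, with $\lambda_k\to\infty$. The uniform curvature decay $|\Rm(g_k(t))|\le\Psi/t$ together with the non-collapsing supplied by $\mathrm{AVR}\ge v$ allow Hamilton's compactness theorem to extract a smooth immortal limit $(M_\infty,g_\infty(t),x_\infty)$ defined on $(0,\infty)$, with $\Ric(g_\infty(t))\ge 0$, $\mathrm{AVR}(g_\infty(t))=v$, and---by weak $L^{n/2}$ lower semicontinuity combined with the monotonicity of the curvature concentration---$\|\Rm(g_\infty(t))\|^2_{L^{n/2}}\le \delta$. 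Perelman's reduced-volume monotonicity applied in the blow-down regime identifies $g_\infty$ as a gradient expanding Ricci soliton. Since $\mathrm{AVR}(g_\infty)>0$ and $\Ric(g_\infty)\ge 0$, the Euclidean Sobolev inequality is available on each time slice, and coupling it with the elliptic identity satisfied by $\Rm$ on a soliton, an $\varepsilon$-regularity/Moser iteration argument in the spirit of \cite{Anderson1990,BandoKasueNakajima1989} forces $\Rm(g_\infty)\equiv 0$ once $\delta$ is sufficiently small (depending on $n$ and $v$). Hence the blow-down limit is the flat Gaussian expander on $\mathbb{R}^n$, so $\mathrm{AVR}(g_\infty)=\omega_n$. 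Combining this with the preservation of AVR along the flow gives $\mathrm{AVR}(g_0)=\omega_n$, and Bishop--Gromov rigidity then forces $(M,g_0)$ to be isometric to $\mathbb{R}^n$.

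The main obstacle is the $\varepsilon$-regularity step that promotes smallness of $\|\Rm\|_{L^{n/2}}$ on the expanding soliton limit to strict flatness: one must exploit the soliton equation to derive a good elliptic inequality for $|\Rm|$, couple it with the Euclidean Sobolev inequality from $\mathrm{AVR}>0$, and run a Moser iteration whose $\delta$-threshold depends only on $n$ and $v$. A secondary technical concern is the precise preservation of AVR both along the flow and under the non-collapsed Gromov--Hausdorff blow-down limit, which requires combining Yokota-type constancy for Type III flows with Cheeger--Colding volume continuity \cite{Colding1997} for the sequence $\{g_k(t)\}$, in order to transfer the equality $\mathrm{AVR}(g_\infty)=\omega_n$ back down to $g_0$.
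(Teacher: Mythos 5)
There is a genuine gap, and it occurs at the very first structural step of your argument: you assert that ``the maximum principle preserves $\Ric(g(t))\ge 0$'' along the flow. For $n\ge 4$ nonnegative Ricci curvature is \emph{not} preserved by the Ricci flow (this is exactly why the theorem is stated for $n\ge 4$ and why the paper's Proposition~\ref{prop:longtime} only records preservation of $\mathcal{R}\ge 0$, non-collapsing, entropy bounds and the $L^{n/2}$ curvature bound, never $\Ric\ge 0$ for $t>0$; counterexamples to preservation of positive Ricci curvature exist already on closed $4$-manifolds, e.g.\ M\'aximo's example). Everything downstream in your outline leans on this false step: the Yokota-type constancy of $\mathrm{AVR}(g(t))$ requires $\Ric(g(t))\ge 0$ at every time, the claim $\Ric(g_\infty(t))\ge 0$ for the blow-down does too, and the final transfer ``$\mathrm{AVR}(g_\infty)$ Euclidean $\Rightarrow \mathrm{AVR}(g_0)$ Euclidean $\Rightarrow$ Bishop--Gromov rigidity'' collapses without it. In addition, the identification of the parabolic blow-down as a \emph{gradient expanding soliton} is unsupported: Perelman's reduced-volume monotonicity produces shrinkers from blow-downs of ancient solutions, not expanders from forward blow-downs of immortal solutions; the known results in the expanding direction (forward reduced volume, or Schulze--Simon type theorems on flows coming out of cones) need hypotheses such as nonnegative curvature operator that you do not have, so even granting $\Ric\ge 0$ along the flow this step would need a genuinely new argument.

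For comparison, the paper's proof avoids both blow-down limits and any preserved Ricci condition. After running the same existence theory (Proposition~\ref{prop:longtime}), it argues by contradiction: if $g_0$ is not flat, Bishop--Gromov rigidity gives $\mathrm{AVR}(g_0)=1-\b$ with $\b>0$, i.e.\ a definite volume deficit at all large scales. A quantitative pseudolocality-type result of Wang (Proposition~\ref{prop:positive-R}) then converts this volume deficit into a scalar curvature lower bound $\mathcal{R}(x,t)\ge \xi t^{-1}$ on $B_{g(t)}(x_0,\sqrt t)$ for all large $t$. Feeding this into the $L^{n/2}$ monotonicity \emph{with its dissipation term} (Lemma~\ref{lma:mono}), namely $E(t)+\int_s^t\int_M|\Rm|^{\frac n2+1}\le E(s)$, the non-collapsing (b) forces $\int_M|\Rm|^{\frac n2+1}\,d\mathrm{vol}_{g(\tau)}\gtrsim \tau^{-1}$, whose time integral diverges logarithmically while the left-hand side stays bounded by $E(0)\le \delta^{n/2}$ --- a contradiction. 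If you want to salvage your approach, you would need to either prove the soliton structure and AVR constancy under only $\mathcal{R}\ge 0$ plus the entropy/non-collapsing estimates (which is not currently available), or replace the blow-down analysis by a quantitative statement at finite times, which is precisely what the paper's combination of Proposition~\ref{prop:positive-R} and Lemma~\ref{lma:mono} accomplishes.
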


The method of this work is based on the Ricci flow which is motivated by the following picture. By the volume comparison, it is known that the tangent cone at infinity is a metric cone thanks to the Euclidean volume growth. If the metric cone is smooth away from the tips, the curvature concentration must be unbounded unless it is flat. In particular, the Ricci flow coming out of the cone is not expected to have bounded curvature concentration unless it is flat. This will be made precise using the quantitative existence theory developed in \cite{ChanHuangLee2022}.

{\it Acknowledgement:} The first named author is supported by EPSRC grant EP/T019824/1. The second named author was partially supported by Hong Kong RGC grant (Early Career Scheme) of Hong Kong No. 24304222, a direct grant of CUHK and a NSFC grant.

\section{ Some preliminaries on Ricci flow}

We want to regularize $g_0$ using the Ricci flow and obtain contradiction from long-time asymptotic of the flow. We first collect some preliminaries of Ricci flow result.

\subsection{Perelman's entropy and its localization}
The entropy along the Ricci flow was introduced by Perelman \cite{Perelman2002} and its definition was further localized by Wang \cite{Wang2018}. Suppose $(M^n,g)$  is an $n$ dimensional complete (not necessarily compact) Riemannian manifold and $\Omega$ is a connected domain on $M$ with smooth boundary (boundaryless if $M=\Omega$). Similar to the notation in \cite{Wang2018}, we let
\begin{align*}
D_g(\Omega)&:=\left\{u: u\in W^{1,2}_0(\Omega), u\geq 0 \text{  and  } \|u\|_2=1 \right\},
\\
W(\Omega, g, u, \tau)&:=\int_{\Omega}\tau(\mathcal{R}\,u^2+4|\nabla u|^2)-2u^2\log u   \,  d\mathrm{vol}_g -\frac{n}{2}\log(4\pi\tau)-n,\notag
\\
\nu(\Omega, g, \tau)&:= \inf_{u\in D_g(\Omega), s \in (0, \tau]}W(\Omega, g, u, s),
\\
\nu(\Omega, g)&:= \inf_{\tau\in (0,\infty)}\nu(\Omega, g, \tau).
\end{align*}
We refer the readers to \cite{Perelman2002} and  \cite{Wang2018} for the (almost) monotonicity of these quantities along the bounded curvature Ricci flow and their applications. Analogously, we can define the standard entropy without the scalar curvature term, which is closely related to the classical Log Sobolev inequality.
\begin{align*}
\bar{W}(\Omega, g, u, \tau)&:=\int_{\Omega}4\tau |\nabla u|^2-2u^2\log u   \,  d\mathrm{vol}_g -\frac{n}{2}\log(4\pi\tau)-n,\notag
\\
\bar\nu(\Omega, g, \tau)&:= \inf_{u\in D_g(\Omega), s \in (0, \tau]}\bar{W}(\Omega, g, u, s),
\\
\bar\nu(\Omega, g)&:= \inf_{\tau\in (0,\infty)}\bar\nu(\Omega, g, \tau).
\end{align*}

\subsection{Existence of immortal Ricci flow from $g_0$}

We start with the long-time existence of the Ricci flow with no bounded curvature assumption obtained by the authors and Huang \cite{ChanHuangLee2022} (see also \cite{ChanChenLee2021} for the case of bounded curvature).
\begin{prop}\label{prop:longtime}
For any $n\geq 4,v,\e>0$, there exists $\delta_0(n,v,\e),C_0(n)>0$ such that if $(M^n,g_0)$ is a complete non-compact manifold with 
\begin{enumerate}
    \item[(i)] $\Ric(g_0)\geq 0$;
    \item[(ii)] The asymptotic volume ratio of $(M,g_0)$ satisfies $$\mathrm{AVR}(g)\geq v>0;$$
    \item[(iii)] $$\left(\int_M |\Rm(g_0)|^\frac{n}{2}\, d\mathrm{vol}_{g_0} \right)^{\frac2n}<\delta<\delta_0,$$
\end{enumerate}
then the asymptotic volume ratio of $g_0$ must satisfy $\mathrm{AVR}(g_0)\geq 1-\e$. Moreover, there exists a complete Ricci flow $g(t)$ on $M\times [0,+\infty)$ with $g(0)=g_0$ such that for all $t\in (0,+\infty)$,
\begin{enumerate}
\item[(a)] $\sup_M |\Rm(g(t))|\leq \e t^{-1}$;
\item[(b)] $\mathrm{Vol}_{g(t)}\left(B_{g(t)}(x_0,\sqrt{t} \right)\geq (1-\e)\omega_n t^{n/2}$ for all $x_0\in M$;
\item[(c)] The Perelman's $\nu$-entropy (of all scale) satisfies 
$$\nu(M,g(t))\geq -\e;$$
\item[(d)] $\displaystyle \left(\int_M |\Rm(g(t))|^\frac{n}{2}\, d\mathrm{vol}_{g(t)} \right)^{\frac2n}\leq C_0\delta$;
\item[(e)] For any compactly supported $u\in C^\infty_{c}(M,g(t))$,
$$\left(\int_M |u|^{\frac{2n}{n-2}}\, d\mathrm{vol}_{g(t)}\right)^{\frac{n-2}{n}}\leq C_0\left( \int_M4|\nabla u|_{g(t)}^2+\mathcal{R}_{g(t)} u^2\,  d\mathrm{vol}_{g(t)}\right);$$
\item[(f)] The scalar curvature satisfies $\mathcal{R}(g(t))\geq 0$, % for all $t>0$.
\end{enumerate}
where $\omega_n$ is the volume of the unit ball in $\R^n$. 
\end{prop}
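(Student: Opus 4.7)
The bulk of the conclusion is essentially a repackaging of the quantitative Ricci flow existence theorem of \cite{ChanHuangLee2022}, together with the sharpened asymptotic volume ratio bound which is the new ingredient here. The plan is to first invoke the existence theorem to obtain $g(t)$ with properties (a), (b), (d), (f) directly, then derive (c) and (e) from Perelman's entropy machinery, and finally upgrade the AVR from $v$ to $1-\e$ by a compactness-contradiction argument.

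The immortal flow $g(t)$ with the curvature decay $|\Rm(g(t))|\leq \e t^{-1}$ of (a), the noncollapsing (b), and the propagated small-curvature-concentration bound (d), should come directly from the main existence theorem of \cite{ChanHuangLee2022}, provided $\delta_0$ is chosen small enough depending on $n,v,\e$. Property (f), $\mathcal{R}(g(t))\geq 0$, is preserved by the Ricci flow from $\mathcal{R}(g_0)\geq 0$ (which follows from $\Ric(g_0)\geq 0$) by the maximum principle, legitimized by the bound (a). For (c), at $t=0$ the hypotheses $\Ric(g_0)\geq 0$ and $\mathrm{AVR}(g_0)\geq v$ yield a lower bound $\nu(M,g_0)\geq -C(n,v)$ via Bishop-Gromov and the log-Sobolev comparison on $\R^n$; Perelman's almost-monotonicity of $\nu$ along the flow, applicable thanks to (a)-(b), then propagates this to $\nu(M,g(t))\geq -\e$ after shrinking $\delta_0$ further. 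The Sobolev inequality (e) is the standard consequence of a uniform lower bound on $\nu$ together with $\mathcal{R}\geq 0$, as in \cite{Wang2018}.

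To promote $\mathrm{AVR}(g_0)\geq v$ to $\mathrm{AVR}(g_0)\geq 1-\e$, I would argue by contradiction. Suppose there exist $\e_0>0$ and a sequence $(M_i^n,g_0^{(i)},p_i)$ verifying (i)-(iii) with $\delta_i\to 0$ but $\mathrm{AVR}(g_0^{(i)})\leq 1-\e_0$. Apply the previous steps to get immortal flows $g_i(t)$ with uniform $|\Rm(g_i(t))|\leq \e t^{-1}$ and uniform noncollapsing. By Hamilton's compactness theorem one extracts a smooth pointed limit flow $(M_\infty,g_\infty(t),p_\infty)$ on $(0,\infty)$. Property (d) together with $\delta_i\to 0$ forces the limit to satisfy $\int_{M_\infty}|\Rm(g_\infty(t))|^{n/2}\,d\mathrm{vol}_{g_\infty(t)}=0$, hence $g_\infty(t)$ is flat; by (b) the limit is isometric to $(\R^n,g_{\mathrm{Euc}})$. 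On the other hand, preservation (or near-preservation) of AVR along the flow under $\Ric\geq 0$, combined with Colding's volume convergence \cite{Colding1997} applied at time zero, gives $\mathrm{AVR}(g_\infty(t))\leq \liminf_i \mathrm{AVR}(g_0^{(i)})\leq 1-\e_0$, a contradiction.

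The principal obstacle I expect is precisely this last step: AVR is an asymptotic invariant at spatial infinity rather than a local quantity, so its behavior under pointed smooth (or Cheeger-Colding) convergence centered at a fixed base point is not automatic. Making the contradiction rigorous seems to require either an explicit preservation-of-AVR statement along Ricci flow with $\Ric\geq 0$ and type-I-like curvature decay, or a diagonal selection of scales that simultaneously controls the base points and the asymptotic cones, so as to reduce the continuity question to Colding's theorem at the initial time slice.
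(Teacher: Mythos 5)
There is a genuine gap, and it sits exactly where you flagged it: the upgrade of the asymptotic volume ratio from $v$ to $1-\e$. Your compactness-contradiction argument cannot work as stated, because $\mathrm{AVR}$ is an asymptotic invariant at spatial infinity and is not continuous (not even semicontinuous in the direction you need) under pointed smooth or Gromov--Hausdorff convergence: a sequence of manifolds that are very flat on larger and larger balls around the base points, but have $\mathrm{AVR}\leq 1-\e_0$ due to behaviour far from the base points, converges in the pointed sense to $(\R^n,g_{\mathrm{Euc}})$ with no contradiction whatsoever. So the limit being Euclidean tells you nothing about $\liminf_i\mathrm{AVR}(g_0^{(i)})$, and Colding's volume convergence, being a statement about balls of fixed radius around the base points, does not bridge this. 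The paper does not attempt such an argument: the bound $\mathrm{AVR}(g_0)\geq 1-\e$ is quoted directly from \cite[Theorem 1.2]{ChanHuangLee2022}, which is a global gap-type result proved there via the Ricci flow itself; it is an input to this proposition, not something recovered by a soft limiting argument.

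This gap propagates into your treatment of (c). The almost-monotonicity of $\nu$ can only transport a lower bound forward in time; starting from $\nu(M,g_0)\geq -C(n,v)$ (which is all that $\Ric\geq 0$ and $\mathrm{AVR}\geq v$ give) you cannot reach $\nu(M,g(t))\geq -\e$ by ``shrinking $\delta_0$'', since $\delta_0$ does not enter the initial entropy bound. The paper's order of logic is the reverse of yours: first $\mathrm{AVR}(g_0)\geq 1-\e$ from \cite[Theorem 1.2]{ChanHuangLee2022}, then $\bar\nu(M,g_0)\geq -\e$ from \cite[Theorem 5.9]{Wang2020} (entropy is almost zero when the volume ratio is almost Euclidean), and only then propagation along the flow. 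A further, more minor, omission: since the curvature may be unbounded as $t\to 0$ and $g(t)\to g_0$ only in $C^\infty_{\loc}$, Perelman's global monotonicity does not apply directly; the paper uses Wang's localized entropy monotonicity \cite[Theorem 5.4]{Wang2018} on balls $B_{g(t_0)}(x_0,8A\sqrt{t_0})$ and lets $A\to\infty$. Similarly, the existence theorem of \cite{ChanHuangLee2022} is applied to the rescaled metrics $R^{-2}g_0$ and the immortal flow is obtained by letting $R\to\infty$ via Chen's uniqueness and a compactness argument, rather than ``directly''; your sketch of (a), (b), (d), (e), (f) is otherwise in line with the paper.
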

\begin{proof}
This was implicitly proved in \cite[Theorem 1.2]{ChanHuangLee2022} and \cite{ChanChenLee2021}.  We give a sketch here. Since the Ricci curvature of $g_0$ is non-negative,  by \cite[Theorem 3.6]{Wang2018} the Euclidean volume growth $\mathrm{AVR}(g_0)\geq v$ implies that there exists $C(n,v)>0$ such that the (standard) $\bar \nu$-entropy of $g_0$ is bounded from below by $-C$ for all scale, i.e. $\bar\nu(M,g_0)>-C$.  It follows from \cite[Theorem 1.2]{ChanHuangLee2022} that the asymptotic volume ratio of $g_0$ satisfies $\mathrm{AVR}(M,g_0)\geq 1-\e$ if $\delta_0$ is sufficiently small and hence without loss of generality we might assume $\bar\nu(M,g_0)\geq -\e$ by \cite[Theorem 5.9]{Wang2020}.  By applying \cite[Theorem 1.1]{ChanHuangLee2022} to $R^{-2}g_0$ and rescaled the resulting solution back, we obtain a sequence of Ricci flow $g_R(t)$ with $g_R(0)=g_0$ on $M\times [0,T\cdot R^2]$ and desired estimate (a), (b), (d) and (f) for some uniform $T>0$ and for all $R\to+\infty$. The constant $C_0$ in (e) will depend only on $n$ since $\bar\nu(M,R^{-2}g_0)\geq -\e$ and condition (iii) is scaling invariant.  By \cite[Corollary 3.2]{Chen2009} and \cite[Theorem 14.16]{ChowBookII}, we might let $R\to+\infty$ to obtain the immortal solution $g(t)$ with (a), (b), (d) and (f), see the proof of \cite[Theorem 1.2]{ChanHuangLee2022}.  The Sobolev inequality (e) will follow from (c) using the argument in \cite{Ye2015,Zhang2007}. One can also apply a scaling argument using \cite[Theorem 2.2]{ChanChenLee2021} to show the uniform Sobolev inequality on $M$. If the flow has bounded curvature for all $t\geq 0$, (c) follows from the classical $\nu$-monotonicity of Perelman \cite{Perelman2002} using the heat kernel estimate from \cite{ChauTamYu2011}.
  To accommodate the  unboundedness of curvature as $t\to 0$, we might argue as follows.  Fix $\tau,t_0>0$, since $g(t)$ is of bounded curvature for $t>0$ and $g(t)\to g_0$ in $C^\infty_{loc}$ as $t\to 0$, \cite[Theorem 5.4]{Wang2018} implies
  \begin{equation}
  \begin{split}
   \nu \left(B_{g(t_0)}(x_0,8A\sqrt{t_0}),g(t_0),\tau \right)
&\geq -A^{-2}+\nu\left(B_{g_0}(x_0,20A\sqrt{t_0}),g_0,\tau+t_0 \right)\\
&\geq -A^{-2}+\nu\left(M,g_0,\tau+t_0 \right)\\
&\geq -A^{-2}-\e
  \end{split}
  \end{equation}
for all $A$ large. By letting $A\to +\infty$, since $\tau$ is arbitrary  we see that (c) holds, for instances see \cite[Proposition 2.3]{Wang2018} for the proof.
\end{proof}

\begin{rem}
It is important to note that the proof to our main result relies on existence of a long-time solution to the Ricci flow with small curvature concentration preserved. Such solutions has been constructed by the authors and Huang \cite{ChanHuangLee2022} when $n\geq 4$. When $n=3$, existence of Ricci flow from $g_0$ with $\Ric\geq 0$ and Euclidean volume growth was shown by Simon-Topping \cite{SimonTopping2021}. We expect that their constructed solution will remain to have small curvature concentration by carefully examining the partial Ricci flow construction. The three dimensional case can however be taken care using the deep result of Cheeger \cite{Cheeger2003} and thus we do not pursue the parabolic approach here. 
\end{rem}

\subsection{Positivity of scalar curvature }

Along the Ricci flow constructed from Proposition~\ref{prop:longtime}, it follows easily from the strong maximum principle that the scalar curvature of $g(t)$ is either strictly positive or $\Ric(g(t))\equiv 0$ for $t>0$. In the latter case, the Ricci flow is static and hence the curvature estimate in Proposition~\ref{prop:longtime} implies that $g(t)\equiv g_0$ is flat.  The following is from  \cite[Proposition 5.12]{Wang2020} which says that the scalar curvature of $g(t)$ is quantitatively positive if the initial metric $g_0$ is non-flat.  Since we will apply this to Ricci flow $g(t)$ with possibly unbounded curvature at $t=0$,  we need a slightly modified version.

\begin{prop}\label{prop:positive-R}
There exists $\e_0(n)>0$ such that the following holds. For any $\delta>0$, there exists $\frac12>\xi(n,\delta)>0$ and $S(n,\delta)>0$ such that if $g(t)$ is a complete Ricci flow on $M\times [0,r^2]$ with $g(0)=g_0$ for some $r>0$ and satisfies 
\begin{enumerate}
\item[(i)] $\nu(M,g(t),r^2)\geq -\e_0$ for all $t\in [0,r^2]$;
\item[(ii)] $\Ric(g_0)\geq -(n-1)\xi r^{-2}$ on $B_{g_0}(x_0,\xi^{-1}r)$;
\item[(iii)] $\mathrm{Vol}_{g_0}\left( B_{g_0}(x_0,s)\right)\leq (1-\delta)\omega_n s^n$ for all $s\in [r,\xi^{-1}r]$
\end{enumerate}
for some $x_0\in M$. Then the scalar curvature satisfies  $$\mathcal{R}(x,t)\geq \xi t^{-1}$$ for all $x\in B_{g(t)}(x_0,\sqrt{t})$ and $t\in [\frac12 r^2,r^2]$.  
\end{prop}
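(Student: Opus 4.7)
The plan is to follow the strategy of \cite[Proposition 5.12]{Wang2020} via a contradiction-and-compactness argument that produces a Ricci flow emerging from a metric cone; the only novelty is accommodating possibly unbounded curvature of $g(t)$ as $t\to 0$. By parabolic rescaling I reduce to $r=1$. Suppose the statement fails for some $\delta>0$. Then there exist complete Ricci flows $g_i(t)$ on $M_i\times[0,1]$ with initial data $g_{i,0}$ satisfying (i)--(iii) with parameters $\xi_i\to 0$, together with points $(y_i,t_i)$, $t_i\in[1/2,1]$ and $y_i\in B_{g_i(t_i)}(x_{0,i},\sqrt{t_i})$, such that $t_i\mathcal{R}(y_i,t_i)\to 0$. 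After passing to a subsequence, $t_i\to t_\infty\in[1/2,1]$. The principal difficulty, and the reason Wang's original argument does not apply verbatim, is the possibly unbounded curvature at $t=0$: interior regularity on positive time must be extracted using only the scale-$r^2$ entropy bound (i), along the lines of \cite{Wang2018,Wang2020,ChanHuangLee2022}.

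The entropy bound yields Perelman-type $\kappa$-noncollapsing at all scales $\leq 1$ and, combined with the heat-kernel and derivative estimates provided by the localized entropy, gives uniform $C^\infty_{\loc}$ control of $g_i(t)$ on parabolic regions bounded away from $t=0$. Hamilton's compactness then extracts a pointed smooth limit Ricci flow $(M_\infty,g_\infty(t),y_\infty)$ on $(0,t_\infty]$ with $\mathcal{R}(y_\infty,t_\infty)=0$. At the initial slice, $g_{i,0}$ satisfies $\Ric\geq-(n-1)\xi_i$ on balls of radius $\xi_i^{-1}\to\infty$ while the normalized volume $\mathrm{Vol}_{g_{i,0}}(B(x_{0,i},s))/(\omega_n s^n)$ stays below $1-\delta$ throughout $[1,\xi_i^{-1}]$; the Cheeger--Colding almost-volume-cone implies almost-metric-cone theorem then produces a pointed Gromov--Hausdorff limit $(C(Y),o)$ that is a metric cone with $\mathrm{Vol}(B(o,1))\leq(1-\delta)\omega_n$. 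The smooth convergence on $(0,t_\infty]$ together with the initial-slice convergence identifies $C(Y)$ as the $t\to 0$ Gromov--Hausdorff limit of $g_\infty(t)$.

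Passing the initial Ricci lower bound to the limit gives $\Ric(g_\infty(0))\geq 0$ in the synthetic sense, and the evolution $\partial_t\mathcal{R}=\Delta\mathcal{R}+2|\Ric|^2$ preserves $\mathcal{R}(g_\infty(t))\geq 0$ for $t>0$. Since $\mathcal{R}(y_\infty,t_\infty)=0$, the strong maximum principle forces $\mathcal{R}\equiv 0$ on $M_\infty\times(0,t_\infty]$, whence $\Ric\equiv 0$ and $g_\infty(t)$ is static Ricci-flat. But then the smooth complete Riemannian manifold $(M_\infty,g_\infty(t_\infty))$ coincides with the metric cone $C(Y)$, and the only smooth complete manifold which is isometric to a metric cone is flat $\R^n$, contradicting the volume deficit $\mathrm{Vol}(B(o,1))\leq(1-\delta)\omega_n$. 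This completes the outline; the main technical step to justify in full is the $C^\infty_{\loc}$ Ricci flow compactness on positive time under only an entropy bound, for which one invokes the machinery of \cite{Wang2018,Wang2020,ChanHuangLee2022}.
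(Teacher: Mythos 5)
Your overall contradiction-and-compactness skeleton matches the paper's (rescale to $r=1$, take $\xi_i\to 0$, use the entropy bound and \cite[Theorem 1.2]{Wang2020} to get $|\Rm(g_i(t))|\le t^{-1}$, $\mathrm{inj}\ge\sqrt t$ on $(0,1]$, extract a limit flow with $\mathcal{R}_\infty(y_\infty,t_\infty)=0$), but your endgame has two genuine gaps. First, you invoke Cheeger--Colding ``almost volume cone implies almost metric cone'' to claim the initial slices converge to a metric cone $C(Y)$. Hypothesis (iii) only gives an \emph{upper} bound $\mathrm{Vol}_{g_i(0)}(B(x_i,s))\le(1-\delta)\omega_n s^n$ on $[1,\xi_i^{-1}]$; it does not say the volume ratio is almost constant across scales, which is what the almost-volume-cone hypothesis requires. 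The GH limit of the initial slices exists by Gromov compactness, but there is no reason it is a cone, so the step ``a smooth manifold isometric to a metric cone is $\R^n$'' has nothing to act on. Second, and more seriously, you assert that ``the smooth convergence on $(0,t_\infty]$ together with the initial-slice convergence identifies $C(Y)$ as the $t\to 0$ limit of $g_\infty(t)$.'' This identification is exactly the delicate point when curvature is unbounded as $t\to0$: with only $|\Rm|\le \e_0 t^{-1}$ and no Ricci lower bound along the flow, the distance distortion between $t=0$ and positive times is not controlled in the expanding direction (the naive bound involves $e^{\int_0^t C/s\,ds}$, which diverges), so the time-zero GH limit and the $t\to0$ limit of the limit flow cannot be matched by a soft argument. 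A smaller but real gap: you say the evolution equation ``preserves $\mathcal{R}\ge0$ for $t>0$'' from a synthetic initial bound; with unbounded curvature near $t=0$ this needs the local maximum principle of \cite{LeeTam2022} applied to the $g_i(t)$ with the Ricci lower bound of (ii), not a global maximum-principle assertion.

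The paper's proof avoids both problems by reversing the direction of transfer. After the strong maximum principle makes the limit flow static Ricci-flat, it identifies $(M_\infty,g_\infty(1))$ with flat $\R^n$ using Anderson's gap theorem \cite{Anderson1990} together with the entropy lower bound (an ingredient your outline never uses for identification, only for compactness). Knowing the limit is flat, the volume evolution $\frac{d}{dt}d\mathrm{vol}=-\mathcal{R}\,d\mathrm{vol}$ gives $\int_0^1\int_{B_{g_i(0)}(x_i,L)}|\mathcal{R}_i|\,d\mathrm{vol}\,d\tau\to0$, which is what lets one pass information back across $t=0$: it forces $(M_i,g_i(0),x_i)\to(\R^n,\delta)$ in the pointed GH sense, and then Colding's volume convergence \cite{Colding1997} contradicts (iii) directly at time zero. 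If you want to salvage your forward-transfer scheme, you would have to supply a uniform two-sided distance (or measured GH) distortion estimate across $t=0$ under only the entropy bound and (ii), which is not available off the shelf; following the paper's backward transfer via the scalar-curvature space-time integral is the cleaner fix.
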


\begin{proof} 
The proof is almost identical to that of \cite[Proposition 5.12]{Wang2020}, we give a sketch for readers' convenience.  We may assume $r=1$ by considering $r^{-2}g(r^2t),t\in [0,1]$.  Suppose on the contrary, we can find a sequence of $\xi_i\to 0$ and $(M_i,g_i(t), x_i)$, $0\le t\le 1$ such that 
\begin{enumerate}
\item $\nu(M_i,g_i(t),1)\geq -\e_0$;
    \item $\Ric(g_i(0))\geq -(n-1)\xi_i\text{  on  } B_{g_i(0)}(x_i, \xi_i^{-1})$;
    \item $\mathrm{Vol}_{g_i(0)}\left(B_{g_i(0)}(x_i,s) \right)\leq (1-\delta)\omega_n s^n$ for all $s\in [1,\xi_i^{-1}]$;
 %   \item $|\Rm(g_i(t))|\leq \a t^{-1}$ on $M_i\times (0,1]$
\end{enumerate}
    and for some $t_i\in [1/2,1]$ and $y_i\in B_{g_i(t_i)}(x_i,\sqrt{t_i})$ we have
    \[
    t_i\mathcal{R}_i(y_i,t_i)< \xi_i\to 0.
    \]
    
    By \cite[Theorem 1.2]{Wang2020}, if $\e_0$ is sufficiently small, then each $g_i(t)$ satisfies 
    \begin{equation}
    |\Rm(g_i(t))|\leq t^{-1}\quad\text{and}\quad \mathrm{inj}(g_i(t))\geq \sqrt{t}
    \end{equation}
    on $M_i\times (0,1]$.  And hence we might apply Hamilton compactness theorem \cite{Hamilton1995} to see that after passing to subsequence,
    \[
    (M_i,g_i(t), x_i), t\in (0,1] \to (M_\infty,g_\infty(t), x_\infty), t\in (0,1]
    \]
    in the smooth Cheeger-Gromov sense.  We also have $t_i\to t_\infty \in [\frac12,1]$ and $y_i\to y_\infty\in \overline{B_{g_\infty(t_\infty)}(x_\infty,\sqrt{t_\infty})}$.     Moreover by the local maximum principle \cite[Theorem 1.1]{LeeTam2022} and Ricci lower bound at $t=0$, $R_{\infty}(x,t)\ge 0$ on $M_\infty\times (0,1]$.   So that $R_\infty(y_\infty,t_\infty)=0$.   By the strong maximum principle, $(M_\infty,g_\infty(t), x_\infty), 0< t\le 1$ is a static flow with $\Ric\equiv 0$. 
    Hence by Andersen gap theorem \cite{Anderson1990} (see also \cite[Theorem 3.5]{Wang2020}), $(M_{\infty}, g_{\infty}(1))$ is flat Euclidean space if $\e_0$ is small enough and thus the entire static flow $(M_{\infty}, g_{\infty}(t))$ is isometric to $\R^n$ for $t>0$.  In particular, this shows that 
    \begin{equation}
    \lim_{i\to+\infty}\int^1_0 \int_{B_{g_i(0)}(x_i,L)} |\mathcal{R}_i|\, d\mathrm{vol}_{g_i(\tau)} d\tau=0
    \end{equation}
    for all $L>0$ by considering the  evolution of volume, see \cite[(5.28)]{Wang2020}. This will imply 
    \[
    (M,g_i(0),x_i)\to (\R^n, \delta, z_\infty)
    \]
    in pointed Gromov Hausdorff sense, where $\delta$ denotes the flat metric.  But this contradicts with (3) by Colding's Volume Convergence Theorem \cite{Colding1997}.  This completes the proof of the proposition.
%   
%To show the estimate for $x\in B_{g(t)}(x_0,\sqrt{t})$ for $t\in [\frac12r^2,r^2]$ where we will assume $r=1$ by parabolic rescaling. We first note that we have $|\Rm(g(t))|\leq t^{-1}$ if $\e_0$ is sufficiently small by \cite[Theorem 1.2]{Wang2020} and hence  by distance distortion \cite[Lemma 8.3]{Perelman2002},  $d_{g_0}(x,x_0)\leq d_{g(t)}(x,x_0)+\b\sqrt{t}$ for some dimensional constant $\b(n)>1$.  In particular for all $s\in [1,\xi^{-1}]$,
%\begin{equation}
%\begin{split}
%\frac{\mathrm{Vol}_{g_0}\left( B_{g_0}(x,s)\right)}{s^n}&\leq \frac{\mathrm{Vol}_{g_0}\left( B_{g_0}(x_0,s+2\b\sqrt{t})\right)}{s^n}\\
%&\leq (1-\delta) \omega_n \cdot \left(1+ 2\b\sqrt{t}\right)^n\\
%&\leq \left(1-\frac12 \delta\right)\omega_n
%\end{split}
%\end{equation}
%provided that $t\leq S$ for some small $S(n,\delta)>0$.  
\end{proof}

\subsection{Monotonicity of $||\Rm||_{n/2}$ along Ricci flows}

Next, we need a global monotonicity of curvature concentration. The following 
localized form is proven in \cite[Lemma 2.1]{ChanChenLee2021}.
\begin{lma}\label{lma:mono-local}
Suppose $n\geq 3$ and $(M,g(t))$ is a complete solution to the Ricci flow for $t\in [0, T]$. Then there exist $C_0(n)>0$ such that for any compactly supported function $\phi(x,t)$ in space-time, we have
\begin{equation}
    \begin{split}
        \frac{d}{dt}\int_M \phi^2|\Rm|^{\frac{n}{2}}\; d\mathrm{vol}_{g(t)} \leq
  &      -C_0^{-1}\int_M|\nabla (\phi |\Rm|^{\frac{n}{4}})|^2\; d\mathrm{vol}_{g(t)}\\
& +C_0\int_M \phi^2|\Rm|^{\frac{n}{2}+1}\; d\mathrm{vol}_{g(t)}\\
& +C_0\int_M  |\nabla \phi|^2|\Rm|^{\frac{n}{2}}\; d\mathrm{vol}_{g(t)}\\
&+\int_M 2\phi |\Rm|^{\frac{n}{2}} \heat \phi  \cdot 
d\mathrm{vol}_{g(t)}.
    \end{split}
\end{equation}
\end{lma}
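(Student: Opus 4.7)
The strategy is a standard parabolic energy estimate: derive a pointwise evolution inequality for $|\Rm|^{n/2}$ along the Ricci flow, test against $\phi^2$, integrate by parts, and absorb the cross terms via weighted Cauchy--Schwarz.

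\textbf{Step 1 (Pointwise evolution).} Start from the Bochner-type inequality along any Ricci flow,
\[
\heat |\Rm|^2 \leq -2|\nabla\Rm|^2 + C|\Rm|^3,\qquad C=C(n).
\]
Apply the chain rule
\[
\heat |\Rm|^\alpha = \alpha|\Rm|^{\alpha-1}\heat|\Rm| - \alpha(\alpha-1)|\Rm|^{\alpha-2}|\nabla|\Rm||^2
\]
for $\alpha = n/2$, combine with Kato's inequality $|\nabla\Rm|^2 \geq |\nabla|\Rm||^2$, and convert $|\Rm|^{n/2-2}|\nabla|\Rm||^2$ into a multiple of $|\nabla |\Rm|^{n/4}|^2$ via $|\nabla |\Rm|^{n/4}|^2 = (n/4)^2 |\Rm|^{n/2-2}|\nabla|\Rm||^2$, to deduce
\[
\heat |\Rm|^{n/2} \leq -c_n|\nabla |\Rm|^{n/4}|^2 + C_n|\Rm|^{n/2+1},
\]
with $c_n, C_n > 0$ (explicitly $c_n = 4(n-2)/n$) depending only on $n \geq 3$. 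To avoid non-smoothness of $|\Rm|$ on its zero set, perform this computation with $(|\Rm|^2 + \varepsilon)^{1/2}$ in place of $|\Rm|$ and let $\varepsilon \downarrow 0$ at the end.

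\textbf{Step 2 (Differentiate and integrate by parts).} Setting $F(t) := \int_M \phi^2|\Rm|^{n/2}\,d\mathrm{vol}_{g(t)}$, differentiate in $t$ using $\partial_t\, d\mathrm{vol}_{g(t)} = -\mathcal{R}\,d\mathrm{vol}_{g(t)}$. Since $|\mathcal{R}| \leq C|\Rm|$, the contribution $-\phi^2|\Rm|^{n/2}\mathcal{R}$ is absorbed into $C_0\phi^2|\Rm|^{n/2+1}$. Split $\partial_t|\Rm|^{n/2} = \Delta|\Rm|^{n/2} + \heat|\Rm|^{n/2}$, apply Step~1 to the second summand, and use symmetric integration by parts (valid since $\phi$ has compact support)
\[
\int \phi^2\Delta|\Rm|^{n/2}\,d\mathrm{vol}_{g(t)} = \int \bigl[\,2|\nabla\phi|^2|\Rm|^{n/2} + 2\phi|\Rm|^{n/2}\Delta\phi\,\bigr]\,d\mathrm{vol}_{g(t)}.
\]
Rewriting $\partial_t\phi = \heat\phi + \Delta\phi$ produces the desired $\int 2\phi|\Rm|^{n/2}\heat\phi\,d\mathrm{vol}_{g(t)}$ term; the remaining $\int 4\phi|\Rm|^{n/2}\Delta\phi\,d\mathrm{vol}_{g(t)}$ is integrated by parts once more, and using $\nabla|\Rm|^{n/2} = 2|\Rm|^{n/4}\nabla|\Rm|^{n/4}$ gives a cross term $-8\int\phi|\Rm|^{n/4}\nabla\phi\cdot\nabla|\Rm|^{n/4}\,d\mathrm{vol}_{g(t)}$ together with a further $|\nabla\phi|^2|\Rm|^{n/2}$ contribution.

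\textbf{Step 3 (Absorb).} Apply weighted Cauchy--Schwarz
\[
\bigl|\,8\phi|\Rm|^{n/4}\nabla\phi\cdot\nabla|\Rm|^{n/4}\,\bigr| \leq \tfrac{c_n}{2}\phi^2|\nabla |\Rm|^{n/4}|^2 + \tfrac{32}{c_n}|\nabla\phi|^2|\Rm|^{n/2}
\]
to absorb half of the negative gradient contribution $-c_n\phi^2|\nabla|\Rm|^{n/4}|^2$ from Step~1. The surviving $-\tfrac{c_n}{2}\phi^2|\nabla|\Rm|^{n/4}|^2$ is converted to the desired form via the pointwise expansion
\[
|\nabla(\phi|\Rm|^{n/4})|^2 \leq 2\phi^2|\nabla |\Rm|^{n/4}|^2 + 2|\nabla\phi|^2|\Rm|^{n/2},
\]
which yields $-\tfrac{c_n}{4}|\nabla(\phi|\Rm|^{n/4})|^2 + \tfrac{c_n}{2}|\nabla\phi|^2|\Rm|^{n/2}$. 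Collecting all $|\nabla\phi|^2|\Rm|^{n/2}$ contributions into a single coefficient $C_0 = C_0(n)$ delivers the claimed inequality. The only real obstacle is the bookkeeping: a strictly negative coefficient in front of $|\nabla(\phi|\Rm|^{n/4})|^2$ must be maintained through every Cauchy--Schwarz absorption, which is precisely what the positivity $c_n > 0$ (valid for all $n \geq 3$) ensures.
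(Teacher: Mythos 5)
Your argument is correct and is the standard energy-estimate computation: the paper itself gives no proof of this lemma, deferring to \cite[Lemma 2.1]{ChanChenLee2021}, and your derivation (Bochner-type evolution of $|\Rm|^2$, chain rule with Kato and $\varepsilon$-regularization to get $\heat |\Rm|^{n/2}\leq -c_n|\nabla|\Rm|^{n/4}|^2+C_n|\Rm|^{n/2+1}$ with $c_n=4(n-2)/n$, then testing with $\phi^2$, integrating by parts and absorbing cross terms by weighted Cauchy--Schwarz) is exactly the expected proof of that cited result. The sign and constant bookkeeping checks out, so there is nothing substantive to add.
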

\vskip0.3cm

%
%Let $(M^n,g)$ be a complete Riemannian manifold satisfying the following conditions:
%\begin{enumerate}
%    \item $\Ric\ge 0$
%    \item $\nu(M,g)\ge -A$
%    \item $\int_M |\Rm|^\frac{n}{2}\,d\mu_g\le \delta$.
%\end{enumerate}
%It follows from the Ricci flow existence result and the local entropy monotoncity that if $0<\delta\le \delta_0(n,A)$, then there exist $C_0=C_0(n,A)>0$ and ;ong time solution to the Ricci flow $g(t)$ on $M\times [0,\infty)$ with $g(0)=g$ such that for all $t\ge 0$, $g(t)$ has nonnegative scalar curvature,
%\begin{eqnarray*}
%    |\Rm|&\le& t^{-1}\\
%  E(t):=\int_M |\Rm|^\frac{n}{2}\,d\mu_t&\le& C_1\delta\\
%    \nu(M, g(t))&\ge& -A\\
%\text{Vol}_t(B_t(x,\sqrt{t}))&\ge & c_0 t^{\frac{n}{2}}???
%\end{eqnarray*}
%We shall consider the situation that $g(t)$ is not the static Ricci flat flow and show that the Energy $\int_M |\Rm|^\frac{n}{2}\,d\mu_t$ satisfies a nice monotoncity formula. The $\nu$ entropy lower bound at all scales also implies a uniform Sobolev inequality along the flow: for all $u\in W^{1,2}(M)$
%\[
%\left(\int_M |u|^{\frac{2n}{n-2}}\,d\mu_t\right)^{\frac{n-2}{n}}\leq ce^{\frac{2A}{n}}\int_M4|\nabla u|^2+R u^2\, d\mu_t.
%\]
%for some positive dimensional constant $c$.

We now apply Lemma~\ref{lma:mono-local} to the Ricci flow $g(t)$ obtained from Proposition~\ref{prop:longtime} to obtain a monotonicity of global $||\Rm(g(t))||_{n/2}$. We denote  
\begin{equation}
E(t):=\int_M |\Rm(g(t))|^{n/2} \,d\mathrm{vol}_{g(t)}  
\end{equation}
for $t\in [0,+\infty)$.

\begin{lma}\label{lma:mono}
Under the assumption of Proposition~\ref{prop:longtime}, we have 
\begin{equation}
E(t)+\int_s^t \int_M |\Rm|^{\frac{n}{2}+1} \,d\mathrm{vol}_{g(\tau)}\,d\tau\leq  E(s)
\end{equation}
for all $0\leq s\leq t<+\infty$.
\end{lma}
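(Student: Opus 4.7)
The plan is to integrate the localized monotonicity Lemma~\ref{lma:mono-local} against a cutoff $\phi_R$ that exhausts $M$, use the Sobolev inequality Proposition~\ref{prop:longtime}(e) together with the smallness (d) of $\|\Rm\|_{L^{n/2}}$ to absorb the positive $|\Rm|^{\frac n2+1}$-term into the negative gradient term, and then let $R\to\infty$.

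For the absorption, set $u=\phi|\Rm|^{n/4}$. H\"older's inequality with exponents $(\tfrac n2,\tfrac n{n-2})$ followed by (e) gives
\[
\int_M\phi^2|\Rm|^{\frac n2+1}\,d\mathrm{vol}_{g(t)} \leq \|\Rm(g(t))\|_{L^{n/2}}\cdot C_0\int_M\bigl(4|\nabla u|^2+\mathcal{R}u^2\bigr)\,d\mathrm{vol}_{g(t)}.
\]
Using $\mathcal{R}\leq n|\Rm|$ pointwise and $\|\Rm(g(t))\|_{L^{n/2}}\leq C_0\delta$ from (d), for $\delta$ small (depending only on $n$) the $\phi^2|\Rm|^{\frac n2+1}$ term can be absorbed to the left, yielding $\int\phi^2|\Rm|^{\frac n2+1}\leq C_n\delta\int|\nabla u|^2$. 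Substituting into Lemma~\ref{lma:mono-local} and shrinking $\delta$ once more, the negative term $-C_0^{-1}\int|\nabla u|^2$ dominates the sum $(C_0+1)\int\phi^2|\Rm|^{\frac n2+1}$, leaving
\[
\frac d{d\tau}\int_M\phi^2|\Rm|^{\frac n2}\,d\mathrm{vol}_{g(\tau)}+\int_M\phi^2|\Rm|^{\frac n2+1}\,d\mathrm{vol}_{g(\tau)}\leq C_0\int_M|\nabla\phi|^2|\Rm|^{\frac n2}+2\int_M\phi|\Rm|^{\frac n2}\heat\phi.
\]

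For the cutoff, fix $x_0\in M$ and a smooth non-increasing $\eta$ on $[0,\infty)$ with $\eta\equiv 1$ on $[0,1]$ and $\eta\equiv 0$ on $[2,\infty)$. Set $\phi_R(x,\tau)=\eta(d_{g(\tau)}(x,x_0)/R)$. The curvature decay (a) combined with standard distance-distortion and Laplacian-comparison estimates under Ricci flow yield $|\nabla\phi_R|\leq CR^{-1}$ and $|\heat\phi_R|\leq CR^{-1}\tau^{-1/2}+CR^{-2}$ in the barrier sense, both supported in the annulus $\{R\leq d_{g(\tau)}(\cdot,x_0)\leq 2R\}$. Integrating the above differential inequality over $[s,t]\subset[0,\infty)$ and using (d) to bound $\int_M|\Rm|^{n/2}$ uniformly in $\tau$, the cutoff error is of order $O(R^{-1})$. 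Because the cutoff is supported on a bounded set and $g(\tau)\to g_0$ in $C^\infty_{loc}$ as $\tau\to 0^+$, we have $\int\phi_R^2|\Rm|^{\frac n2}d\mathrm{vol}_{g(\tau)}\to\int\phi_R^2|\Rm(g_0)|^{\frac n2}d\mathrm{vol}_{g_0}$ for each fixed $R$, so the endpoint $s=0$ is handled directly. Sending $R\to\infty$ and using monotone convergence then yields the desired inequality for all $0\leq s\leq t<\infty$.

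The principal obstacle is the absorption step: the exponent $\frac n2+1$ is exactly borderline for the Sobolev inequality (e), and closing the estimate depends critically on the global smallness of $\|\Rm\|_{L^{n/2}}$ guaranteed by (d). A secondary subtlety is the $\tau^{-1/2}$-singularity of $|\heat\phi_R|$ arising from distance distortion; this singularity is integrable in $\tau$, so the cutoff error remains finite on $[0,t]$ and vanishes as $R\to\infty$.
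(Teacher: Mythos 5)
Your proof is correct and follows essentially the same route as the paper: integrate the localized monotonicity of Lemma~\ref{lma:mono-local} against an exhausting cutoff, absorb the $|\Rm|^{\frac n2+1}$ term via H\"older plus the uniform Sobolev inequality (e) and the smallness of $\|\Rm\|_{L^{n/2}}$ from (d), then let the cutoff radius tend to infinity. The only cosmetic difference is the cutoff: the paper uses a Perelman-type cutoff $e^{-10t\rho^{-2}}\varphi((d_{g(t)}(x,x_0)+c_n\sqrt t)/\rho)$ engineered so that $\heat\phi\le 0$, whereas you keep the $\heat\phi_R$ error and bound it by $CR^{-1}\tau^{-1/2}+CR^{-2}$, which vanishes in the limit — both are fine.
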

\begin{proof}
We fix $0\leq s<t<+\infty$ and $x_0\in M$. In what follows, we will use $C_i$ to denote constants which depend only on $n$. Let $\eta(x,t)=d_{g(t)}(x,x_0)+c_n\sqrt{t}$ and define $\phi(x,t)=e^{-10t\rho^{-2}}\varphi\left(\frac{\eta(x,t)}{\rho}\right)$ where $\rho\gg 1$  and $\varphi(s)$ is a cutoff function on $\mathbb{R}$ so that $\varphi\equiv 1$ on $(-\infty,\frac12]$, $\varphi\equiv 0$ outside $(-\infty,1]$ and satisfies $\varphi''\geq -10\varphi $, $0\geq \varphi'\geq -10\sqrt{\varphi}$. By choosing $c_n$ large enough, we have from \cite[Lemma 8.3]{Perelman2002} that  
\begin{eqnarray}
\heat \phi \leq 0.
\end{eqnarray}
Moreover,  $
    |\nabla_{g(t)} \phi|^2\le C_n \rho^{-2}$
and $1\ge \phi\to 1$ on $M\times[s,t]$ as $\rho\to+\infty$.

Thanks to Lemma \ref{lma:mono-local}, 
\begin{equation}
\begin{split}
       \frac{d}{dt}\int_M \phi^2|\Rm|^{\frac{n}{2}}\,d\mathrm{vol}_{g(t)}  \leq& C_1 \delta^{n/2} \rho^{-2}
   -C_1^{-1}\int_M|\nabla (\phi |\Rm|^{\frac{n}{4}})|^2\,d\mathrm{vol}_{g(t)}  \\
& +C_1\int_M \phi^2|\Rm|^{\frac{n}{2}+1}\,d\mathrm{vol}_{g(t)}\\
=&C_1 \delta^{n/2} \rho^{-2}
   -C_1^{-1}\int_M|\nabla (\phi |\Rm|^{\frac{n}{4}})|^2\,d\mathrm{vol}_{g(t)}  \\
& +(C_1+1)\int_M \phi^2|\Rm|^{\frac{n}{2}+1}\,d\mathrm{vol}_{g(t)}-\int_M \phi^2 |\Rm|^{\frac{n}{2}+1}\,d\mathrm{vol}_{g(t)}.
\end{split}
\end{equation}

By H\"older inequality,
\begin{equation}\label{eqn:R-high-order}
\begin{split}
&\quad  (C_1+1)\int_M \phi^2|\Rm|^{\frac{n}{2}+1}\,d\mathrm{vol}_{g(t)} \\
 &\leq (C_1+1) \left(\int_M |\Rm|^{\frac{n}{2}}\,d\mathrm{vol}_{g(t)}  \right)^{\frac{2}{n}}\left(\int_M \phi^{\frac{2n}{n-2}}|\Rm|^{\frac{n^2}{2(n-2)}}\,d\mathrm{vol}_{g(t)}  \right)^{\frac{n-2}{n}}   \\
 &\leq C_2 \delta \left(\int_M \phi^{\frac{2n}{n-2}}|\Rm|^{\frac{n^2}{2(n-2)}}\,d\mathrm{vol}_{g(t)}\right)^{\frac{n-2}{n}}.
\end{split}
\end{equation}

On the other hand, we use the uniform Sobolev inequality, i.e. (e) %(iv) 
in Proposition~\ref{prop:longtime} together with \eqref{eqn:R-high-order}, to see that if we further require $\delta$ to be smaller than a dimensional constant $\delta_0(n)$, then
\begin{equation}
\begin{split}
&\quad \int_M|\nabla (\phi |\Rm|^{\frac{n}{4}})|^2\,d\mathrm{vol}_{g(t)} \\
&\ge C_3^{-1}\left(\int_M \phi^{\frac{2n}{n-2}}|\Rm|^{\frac{n^2}{2(n-2)}}\,d\mathrm{vol}_{g(t)}\right)^{\frac{n-2}{n}}-\frac{1}{4}\int_M \phi^2 \mathcal{R} |\Rm|^{\frac{n}{2}}\,d\mathrm{vol}_{g(t)}\\
&\ge  \left(C_3^{-1}-C_4\delta\right)\left(\int_M \phi^{\frac{2n}{n-2}}|\Rm|^{\frac{n^2}{2(n-2)}}\,d\mathrm{vol}_{g(t)}\right)^{\frac{n-2}{n}}\\
&\ge\frac{1}{2C_3}\left(\int_M \phi^{\frac{2n}{n-2}}|\Rm|^{\frac{n^2}{2(n-2)}}\,d\mathrm{vol}_{g(t)}\right)^{\frac{n-2}{n}}.
\end{split}
\end{equation}

Therefore, we arrive at the following almost monotonicity inequality: if we shrink $\delta_0$ further, then 
\begin{equation}
\begin{split}
\frac{d}{dt}\int_M \phi^2|\Rm|^{\frac{n}{2}}\,d\mathrm{vol}_{g(t)} &\leq  C_1\delta^{n/2}\rho^{-2} -\frac{1}{2C_3C_1}\left(\int_M \phi^{\frac{2n}{n-2}}|\Rm|^{\frac{n^2}{2(n-2)}}\,d\mathrm{vol}_{g(t)}\right)^{\frac{n-2}{n}}\\
& +C_2\delta \left(\int_M \phi^{\frac{2n}{n-2}}|\Rm|^{\frac{n^2}{2(n-2)}}\,d\mathrm{vol}_{g(t)}\right)^{\frac{n-2}{n}}\\
&- \int_M \phi^2 |\Rm|^{\frac{n}{2}+1}\,d\mathrm{vol}_{g(t)} \\
&\leq -\int_M \phi^2 |\Rm|^{\frac{n}{2}+1}\,d\mathrm{vol}_{g(t)} +C_1\rho^{-2}.
\end{split}
\end{equation}

By  integrating the above differential inequality over $[s,t]$, we see that 
\begin{equation}
\begin{split}
&\quad \int_M \phi^2|\Rm|^{\frac{n}{2}}\,d\mathrm{vol}_{g(t)}+\int_s^t\int_M \phi^2 |\Rm|^{\frac{n}{2}+1}\,d\mathrm{vol}_{g(\tau)}\,d\tau\\
&\leq \int_M \phi^2|\Rm|^{\frac{n}{2}}\,d\mathrm{vol}_{g(s)}+C_1\rho^{-2}(t-s).
\end{split}
\end{equation}

Since $|\Rm(g(t))|\in L^{n/2}$ for all $t\geq 0$, result follows from the dominated convergence theorem and by letting $\rho\to +\infty$.
\end{proof}

With the above preparations, we are now ready to prove Theorem~\ref{thm:gap}. 
\begin{proof}[Proof of Theorem \ref{thm:gap}]

We let $g(t)$ be the immortal Ricci flow constructed from Proposition~\ref{prop:longtime} with initial data $g(0)=g_0$. We will assume $\delta$ to be sufficiently small. We first note from Proposition~\ref{prop:longtime}, for any $\e>0$, we might choose $\delta$ so that 
\begin{equation}
\mathrm{AVR}(M,g_0)=\lim_{r\to+\infty}\frac{\mathrm{Vol}_{g_0}\left(B_{g_0}(x_0,r) \right)}{\omega_n r^n}\geq 1-\e.
\end{equation}

Suppose on the contrary that $g_0$ is non-flat. From $\Ric\geq 0$ and the rigidity of volume comparison Theorem we know that the asymptotic volume ratio of $g_0$ satisfies
\begin{equation}
\mathrm{AVR}(M,g_0)=1-\b
\end{equation}
for some $\b>0$ which will be small by shrinking $\delta$. Fix $x_0\in M$, there exists $r_0>0$ such that for all $r>r_0$,
\begin{equation}
1-\frac\b2 \geq \frac{\mathrm{Vol}_{g_0}\left(B_{g_0}(x_0,r) \right)}{\omega_n r^n}\geq 1-\b.
\end{equation}

By Proposition~\ref{prop:positive-R}, there exists $\xi>0$ so that for all $x\in B_{g(t)}(x_0,\sqrt{t})$ and $t>r_0^2$, 
\begin{equation}
\mathcal{R}(x,t)\geq \xi t^{-1}.
\end{equation}

Using also 
\begin{equation}
\mathrm{Vol}_{g(t)}\left( B_{g(t)}(x_0,\sqrt{t}) \right)\geq (1-\e)\omega_n t^{n/2}
\end{equation}
for all $t>0$, 
we see that for all $t>r_0^2$,
\begin{equation}
\begin{split}
& \quad \int_{r_0^2}^t \int_M |\Rm|^{\frac{n}{2}+1} \,d\mathrm{vol}_{g(\tau)}\,d\tau\\
&\geq  \int_{r_0^2}^t\int_{B_{g(\tau)}(x_0,\sqrt{\tau})}|\Rm|^{\frac{n}{2}+1} \,d\mathrm{vol}_{g(\tau)}\,d\tau\\
&\geq (1-\e) \omega_n \xi^{\frac{n}2+1} \int_{r_0^2}^t \tau^{-1} d\tau=  (1-\e) \omega_n \xi^{\frac{n}2+1} \log \left(\frac{t}{r_0^2} \right).
\end{split}
\end{equation}

But this contradicts with Lemma~\ref{lma:mono} since the right hand side tends to $+\infty$ as $t\to+\infty$ while the left hand side is uniformly bounded. This completes the proof of Theorem \ref{thm:gap}.
\end{proof}

\end{document}